\numberwithin{equation}{section}
\definecolor{vert}{rgb}{0,0.6,0}
\theoremstyle{plain}
\newtheorem{thm}{Theorem}
\newtheorem{defn}{Definition}
\newtheorem{lem}[thm]{Lemma}
\theoremstyle{remark}
\newcommand{\R}{\mathbb{R}}
\newcommand{\e}{\epsilon}
\newcommand{\p}{\partial}
\begin{document}
\title[Strong Comparison Principle]
{Strong Comparison Principle for $p-$harmonic functions in Carnot-Caratheodory spaces}

\author[L. Capogna]{Luca Capogna}
\address{Luca Capogna, Department of Mathematical Sciences, Worcester Polytechnic Institute, Worcester, MA 01609, USA, {\tt lcapogna@wpi.edu}}

\author[X. Zhou]{Xiaodan Zhou}
\address{Xiaodan Zhou, Department of Mathematical Sciences, Worcester Polytechnic Institute, Worcester, MA 01609, USA, {\tt xzhou3@wpi.edu}}

\keywords{ subelliptic PDE,  strong comparison principle, $p$-harmonic functions, sub-Riemannian geometry\\ MSC: 35H20, 35B50 \\
L.C. was partially funded by NSF awards  DMS 1449143  and DMS 1503683.
X. Z. was partially funded by   an   AMS-Simons Travel Grant}
\date{\today}

\maketitle
\begin{abstract}
We  extend Bony's   propagation of support argument \cite{Bony} to  $C^1$  solutions of the non-homogeneous sub-elliptic $p-$Laplacian associated to a system of smooth vector fields satisfying H\"ormander's finite rank condition. As a consequence we prove a strong maximum principle and strong comparison principle that generalize  results of Tolksdorf \cite{Tolksdorf}.
\end{abstract}
\section{Introduction}

Let $\Omega\subset \mathbb{R}^n$ be an open and connected set, and consider a family of smooth vector fields $X_1, \cdots, X_m$ in $\mathbb{R}^n$ satisfying H\"ormander's finite rank condition \cite{Hormander},
\begin{equation}\label{hormander}
{\rm rank\  Lie}[X_1, \cdots, X_m](x)=n,
\end{equation}
for all $x\in \Omega$.  We set $X u=(X_1 u, \cdots, X_m u)$ for any function $u:\Omega\to \R$ for which the expression is meaningful.

 In this paper we will prove a strong comparison principle for solutions of the  class  of quasilinear, degenerate elliptic equations 
\begin{equation}\label{PDE}
L_p u=\sum_{j=1}^m X_j^* (A_j(X u))=f(x,u),
\end{equation}
satisfying the structure conditions \eqref{structure condition}, and which includes the $p-$Laplacian, in the range $p>1$,
associated to $X_1,...,X_m$ and to  the Lebesgue measure $dx$ in $\R^n$. 
Note that in \eqref{PDE} we have let  $X_j^*=-X_j+d_j(x)$ denote the $L^2$ adjoint of  the operator $X_j$ with respect to the Lebesgue measure. Here $d_j$ is a smooth function obtained as the trace of $X_j$. We explicitly note that all the results in this paper continue to hold if one   substitutes the Lebsgue measure $dx$ with any other measure $d\mu=\lambda(x) dx$ with $\lambda\in C^1$ density function. In particular the results apply in any subRiemannian manifold, for  solutions of the subelliptic $p-$Laplacian associated to a smooth volume form.    
%
%
%

In addition to the structure conditions \eqref{structure condition}, our strong comparison principle holds under the following hypothesis: 
\begin{equation}\label{f}
\begin{aligned}  & (i) \ \ \partial_u f\le 0 \text{ in  }\Omega,\\ 
&   (ii) \ \ | f(x,u_2+\e)-f(x,u_2)|\le  L\epsilon, \text{ for any } \epsilon \in [0,\epsilon_0], x\in \Omega\\
 \end{aligned}
\end{equation}
for some positive constants $L, \epsilon_0$.
Our main result is the following

\begin{thm}[Strong Comparison Principle]\label{main}
Let $\Omega\subset \R^n$ be a connected open set and consider two weak solutions $u_1\in C^1(\bar \Omega)$, and $u_2\in C^2(\bar \Omega)$ of \eqref{PDE} in $\Omega$, with $ |X u_2|\ge \delta \text{ in }\Omega$. We assume that the structure conditions \eqref{structure condition}, and the hypothesis \eqref{f} are satisfied.
If $$u_1\ge u_2 \text{  in }\Omega, $$ then either $u_1=u_2$ or $$u_1> u_2 \text{ in }\Omega.$$
\end{thm}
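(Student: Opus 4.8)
The plan is to reduce the comparison principle to a strong minimum principle for the nonnegative difference $w := u_1 - u_2$. Since $w \in C^1(\bar\Omega)$ and $w \ge 0$, the set $Z := \{x \in \Omega : w(x) = 0\}$ is relatively closed in $\Omega$. Because $\Omega$ is connected, it suffices to prove that $Z$ is also open: then either $Z = \emptyset$, giving $u_1 > u_2$ throughout $\Omega$, or $Z = \Omega$, giving $u_1 = u_2$. Thus everything is local, and I would fix an interior zero $x_0 \in Z$ and work in a small neighborhood of it.

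First I would linearize. Subtracting the two equations $L_p u_1 = f(x,u_1)$ and $L_p u_2 = f(x,u_2)$ and applying the fundamental theorem of calculus to each $A_j$ along the segment $\xi_t := (1-t) X u_2 + t X u_1$, $t \in [0,1]$, yields
$$ A_j(X u_1) - A_j(X u_2) = \sum_{k=1}^m a_{jk}(x)\, X_k w, \qquad a_{jk}(x) := \int_0^1 \partial_{\xi_k} A_j(\xi_t)\, dt. $$
Treating $f$ the same way and using hypothesis \eqref{f} gives $f(x,u_1) - f(x,u_2) = -c(x)\, w$ with $0 \le c(x) \le L$, the lower bound coming from $\partial_u f \le 0$ and the upper bound from the Lipschitz estimate \eqref{f}(ii). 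Hence $w$ solves, in the weak sense,
$$ \cL w := \sum_{j=1}^m X_j^*\Big( \sum_{k=1}^m a_{jk}\, X_k w \Big) + c\, w = 0, $$
a linear subelliptic equation whose zeroth-order coefficient $c$ carries the favorable sign for a minimum principle.

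The crucial point is uniform subellipticity near $Z$. Since $w \ge 0$ attains an interior minimum at $x_0$ and $w \in C^1$, we have $Xw(x_0) = 0$, i.e. $Xu_1(x_0) = Xu_2(x_0)$, with $|Xu_2(x_0)| \ge \delta$. By continuity of $Xu_1$ and $Xu_2$ there is a neighborhood $U \ni x_0$ on which $|\xi_t| \ge \delta/2$ for every $t \in [0,1]$; on $U$ the structure conditions \eqref{structure condition} then force the symmetric matrix $(a_{jk})$ to satisfy $\lambda|\zeta|^2 \le \sum_{j,k} a_{jk}\zeta_j\zeta_k \le \Lambda|\zeta|^2$ with $0 < \lambda \le \Lambda$ depending only on $p$, $\delta$, and the $C^0$ bounds on $Xu_1, Xu_2$. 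Consequently $\cL$ is a genuinely (uniformly) subelliptic operator built from the H\"ormander system $X_1,\dots,X_m$, to which the propagation-of-support machinery applies.

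Finally I would invoke the strong minimum principle obtained from Bony's propagation argument: the nonnegative solution $w$ of $\cL w = 0$ attains the value $0$ at the interior point $x_0$, so its zero set is invariant under the flows of the horizontal fields, and H\"ormander's condition \eqref{hormander} propagates this invariance to fill a full neighborhood, whence $w \equiv 0$ on $U$ and $Z$ is open. The main obstacle is that the coefficients $a_{jk}$ are only continuous, since they involve $Xu_1$ with $u_1$ merely $C^1$, whereas Bony's classical argument presumes smooth coefficients. The heart of the matter is therefore to run the propagation of support at this reduced regularity, exploiting the divergence form of $\cL$, the weak formulation, and the extra smoothness of the ``backbone'' $Xu_2$ (from $u_2 \in C^2$) to compensate for the rough part carried by $Xu_1$.
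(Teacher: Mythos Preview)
Your linearization strategy is different from the paper's route and, as you yourself flag, it runs into a real obstacle that you do not actually resolve. Once you pass to $\mathcal{L}w=\sum_j X_j^*(a_{jk}X_kw)+cw=0$, the principal coefficients $a_{jk}(x)=\int_0^1\partial_{\xi_k}A_j((1-t)Xu_2+tXu_1)\,dt$ are merely continuous, because they involve $Xu_1$ with $u_1\in C^1$. Bony's propagation machinery (both the Hopf step and the invariance of the contact set under flows) is stated for operators with smooth, or at least Lipschitz, coefficients; in particular the barrier computation in the Hopf lemma requires differentiating the principal part. Your last paragraph concedes this is ``the heart of the matter'' but offers only a heuristic (divergence form, weak formulation, the $C^2$ backbone $Xu_2$) rather than an argument. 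As it stands, the proposal is a plausible outline with its key step missing.

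The paper sidesteps this difficulty by \emph{not} linearizing. Instead of studying $w=u_1-u_2$, it proves a nonlinear Hopf-type comparison lemma directly: at a point $y$ of the contact set $F=\{u_1=u_2\}$ with exterior normal $\mathbf{v}$, one builds the usual Gaussian barrier $b(x)=\alpha^{-2}(e^{-\alpha|x-z|^2}-e^{-\alpha r^2})$ and computes $L_p(u_2+b)$ \emph{pointwise}. This is where the asymmetric hypotheses pay off: $u_2\in C^2$ and $|Xu_2|\ge\delta$ make $L_p(u_2+b)$ classically defined, and a careful expansion (with $\alpha$ large) gives $L_p(u_2+b)\le f(x,u_2+b)$ on a half-ball $U$. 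The weak comparison principle then yields $u_2+b\le u_1$ on $U$, and the standard derivative contradiction at $y$ forces $\langle X_i(y),\mathbf{v}\rangle=0$ for every $i$. From there Bony's Theorems on tangency and Lie-algebra propagation (which concern only the smooth fields $X_1,\dots,X_m$, not any coefficients) give $F=\emptyset$ or $F=\Omega$. No linear operator with rough coefficients ever appears.

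In short: your reduction to a linear $\mathcal{L}$ is natural and is indeed the approach taken elsewhere in the literature (the paper cites \cite{CCHY}), but it shifts the burden to a Hopf/propagation result for subelliptic operators with only $C^0$ principal part, which you have not supplied. The paper's direct nonlinear barrier argument is precisely designed to avoid that burden, at the price of requiring $u_2\in C^2$ and $|Xu_2|\ge\delta$ so that $L_p(u_2+b)$ can be evaluated classically.
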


As it will be evident from the proof, the regularity assumptions and the lower bound on $|Xu_2|$ are required only in a neighborhood of the contact set. The lower bound is not required in the non-degenerate case $\kappa>0$.

Bony's method can also be used to establish a non-homogenous strong maximum principle. We suppose that $f$ satisfy the following conditions: for all $x\in \Omega$ and $u\in\R$,
\begin{equation}\label{mf}
\begin{aligned}  & (i) \ \ \partial_u f\le 0 ,\\ 
&   (ii) \ \ | f(x,u)|\le \bar C (\kappa+|u|)^{p-2}|u|  \\
 \end{aligned}
\end{equation}
for some positive constant $\bar C$ and $\kappa$ as in the structure conditions \eqref{structure condition}.
\begin{thm}[Strong Maximum Principle]\label{max}
Let $\Omega\subset \R^n$ be a connected open set and consider a weak solution $u\in C^1(\bar \Omega)$ of \eqref{PDE} in $\Omega$. We assume that the structure conditions \eqref{structure condition} and the hypothesis \eqref{mf} hold.
If $$u\ge 0 \text{  in }\Omega, $$ then either $u=0$ or $$u> 0 \text{ in }\Omega.$$
\end{thm}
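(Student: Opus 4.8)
The plan is to run Bony's propagation-of-support argument, exactly as in the proof of Theorem \ref{main}, but on the single nonnegative supersolution $u$ rather than on a difference of two solutions. Since $u$ is continuous, the contact set $Z:=\{x\in\Omega:\ u(x)=0\}$ is relatively closed in $\Omega$; as $\Omega$ is connected, it suffices to show that $Z$ is also \emph{open}, for then $Z=\emptyset$ (hence $u>0$) or $Z=\Omega$ (hence $u\equiv 0$). I first record that $u$ is a supersolution of the homogeneous operator: evaluating \eqref{mf}(ii) at $u=0$ gives $f(x,0)=0$, and \eqref{mf}(i) then yields $f(x,u)\le f(x,0)=0$ for every $u\ge 0$, so $L_p u=f(x,u)\le 0$.

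Next I would linearize the equation around $u$ to reduce matters to the linear subelliptic setting in which Bony's method is available. Using the fundamental theorem of calculus in the $\xi$-variable for $A_j$ (recall $A_j(0)=0$ from the structure conditions) and in the $u$-variable for $f$ (recall $f(x,0)=0$), one writes $u$ as an exact weak solution of
\[
\sum_{j,k=1}^m X_j^*\big(a_{jk}(x)\,X_k u\big)-c(x)\,u=0,
\qquad
a_{jk}(x):=\int_0^1 \partial_{\xi_k}A_j\big(t\,Xu(x)\big)\,dt,
\]
with $c(x):=\int_0^1\partial_u f\big(x,t\,u(x)\big)\,dt$. By \eqref{mf}(i) the potential satisfies $c\le 0$, which is the sign favorable to the minimum principle, and by \eqref{mf}(ii) it is controlled by $|c(x)|\le \bar C(\kappa+|u|)^{p-2}$. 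The structure conditions \eqref{structure condition} make $(a_{jk}(x))$ a nonnegative symmetric matrix comparable, in the horizontal directions, to $(\kappa+|Xu(x)|)^{p-2}$ times the identity, so the second-order part of this operator is a degenerate-elliptic combination of $X_1,\dots,X_m$.

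With this linear equation in hand, I would apply the propagation lemma underlying Theorem \ref{main}: a nonnegative supersolution of a linear subelliptic operator whose principal part is spanned by $X_1,\dots,X_m$ has a zero set invariant under the horizontal fields, and H\"ormander's condition \eqref{hormander} then propagates the vanishing of $u$ from the interior minimum point into a full neighborhood of any contact point. Because the contact value $0$ is attained as a minimum and $c\le 0$, the subelliptic strong maximum (minimum) principle forces $u\equiv 0$ near each point of $Z$. This shows $Z$ is open and closes the argument.

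The main obstacle is the behavior of the coefficients $a_{jk}$ and $c$ near the contact set, where $u$ attains its interior minimum, so that $Xu\to 0$ along $\partial Z$. In the non-degenerate case $\kappa>0$ one has $a_{jk}\sim\kappa^{p-2}$ and $|c|\le \bar C\kappa^{p-2}$, so the linearized operator is uniformly subelliptic with bounded potential and the linear propagation argument applies verbatim. When $\kappa=0$ the operator degenerates (for $p>2$) or becomes singular (for $1<p<2$) precisely along $\{Xu=0\}\supset\partial Z$, and the linearization alone no longer produces a bounded, uniformly subelliptic problem; here the growth restriction $|f(x,u)|\le \bar C(\kappa+|u|)^{p-2}|u|$ in \eqref{mf}(ii)---the subelliptic counterpart of V\'azquez's hypothesis---must be exploited to build a barrier on the positivity set $\{u>0\}$, or to pass to the limit in a family of regularized problems $\kappa_\e\downarrow 0$, so that the propagation of the zero set survives the degeneracy.
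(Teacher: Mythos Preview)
Your strategy is genuinely different from the paper's, and as written it is incomplete. The paper does \emph{not} linearize: it proves a Hopf-type lemma directly for the nonlinear operator $L_p$. Concretely, at a point $y\in Z^*$ with exterior normal $\mathbf v$ and $\langle X_i(y),\mathbf v\rangle\neq 0$ for some $i$, the paper builds the explicit barrier $b(x)=k\big(e^{-\alpha|x-z|^2}-e^{-\alpha r^2}\big)$, $z=y+\mathbf v$, $r=|\mathbf v|$, and computes $L_p b$ pointwise using the structure conditions \eqref{structure condition}. For $\alpha$ large this gives $L_p b\le -C\,|b|(\kappa+|b|)^{p-2}$, and it is precisely hypothesis \eqref{mf}(ii) that then closes the loop: $-\bar C\,|b|(\kappa+|b|)^{p-2}\le f(x,b)$, so $L_p b\le f(x,b)$. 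One then applies the \emph{nonlinear} weak comparison principle to $b$ and $u$ on a half-ball, obtains $b\le u$, and contradicts $\nabla u(y)=0$ via the positive normal derivative of $b$. Bony's Theorems \ref{B1}--\ref{B2} finish the propagation. No equation with $u$-dependent coefficients ever appears.

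Your linearization route runs into two difficulties you do not resolve. First, the coefficients $a_{jk}(x)=\int_0^1\partial_{\xi_k}A_j(tXu(x))\,dt$ inherit only $C^0$ regularity from $u\in C^1$, so you cannot simply invoke ``the propagation lemma underlying Theorem~\ref{main}'' or Bony's linear maximum principle off the shelf---those require more regular coefficients, and in any case the paper contains no such linear lemma to cite. You would still have to construct a barrier and run a Hopf argument for your divergence-form operator with merely continuous $a_{jk}$, which you do not do. Second, your final paragraph is an explicit acknowledgment that the argument is unfinished in the degenerate regime: ``must be exploited to build a barrier \ldots\ or to pass to the limit'' is a plan, not a proof. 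The paper's direct approach sidesteps both issues because the barrier $b$ is smooth, $L_p b$ is computed on $b$ alone (no $u$-dependent coefficients), and \eqref{mf}(ii) is tailored exactly to produce $L_p b\le f(x,b)$.
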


The proof of these results is at the end of Section \ref{3}.
 Theorem \ref{main} and Theorem \ref{max}  extend to the subelliptic setting the strong maximum and comparison principles proved by Tolksdorff  in \cite[Propositions 3.2.2 and  3.3.2]{Tolksdorf}.  

In the subelliptic setting Theorem \ref{main} seems to be new even in the homogeneous case $f=0$. In terms of previous literature on this subject: we recall that  the case $p=2$ was established through geometric methods by Bony in his landmark paper \cite{Bony}. A proof of the strong maximum principle for the subelliptic $p-$Laplacian in $H-$type groups can be found in \cite{YN}. We note however that at the conclusion of that proof the authors claim that one can always fit a gauge ball tangentially at every point of the set where the solution attains the maximum. This statement is not proved in \cite{YN}, and since gauge balls have zero curvature at the poles, we do not seem how it can be proved. 

A strong comparison and maximum principle for smooth solutions of the subelliptic $p$-Laplacian and of the horizontal mean curvature operator has been recently proved by Cheng, Chiu, Hwang and Yang  in their preprint \cite{CCHY}. Their proof is based on a linearization approach which is different from our arguments,  however it also ultimately relies  on Bony's argument, and holds in every subRiemannian manifold.  In comparison to the present paper, on the one hand  our results hold for solutions which do not have to be smooth necessarily\footnote{We recall that in general $p-$harmonic functions do not enjoy more regularity than the H\"older continuity of their gradient.}, but for the comparison principle we require one of the two solutions to have non-vanishing horizontal gradient.  On the other hand while we only deal with the $p-$Laplacian,  in \cite{CCHY} the authors also establish far reaching results for the mean curvature operator, including some special cases where $|Xv_2|$ is allowed to vanish in a controlled fashion and still have a comparison principle.

 The technical core of  the  proofs in the present paper is in Lemma \ref{hopf} and consists in an adaptation of  Bony's argument to our nonlinear setting. Note that, as in the Euclidean setting, one cannot relax the conditions on $u_1$, $u_2$ and $f$ unless more hypothesis are added.

In closing we note that  both in the elliptic and in the subelliptic case, a  corresponding strong maximum principle for the homogenous problem, $f=0$, can be established immediately from the Harnack inequality (see for instance \cite{BHT}, \cite{HH}, \cite{CDG1}), as well as with small modifications of the argument presented here. However, while in the linear setting one can deduce the strong comparison principle from the strong maximum principle, this is no longer the case in the nonlinear setting, where a new approach is needed.

\section{Bony's propagation of support technique}
Tolksdorf's argument in \cite[3.3.2]{Tolksdorf} breaks down in the subelliptic setting, due to the fact that the horizontal gradient of the barrier functions typically used in this proof may vanish.
The same problem occurs also in the linear setting, for $p=2$. To deal with this issue we follow the outline of the proof of the strong maximum principle for subLaplacians, from Bony's paper \cite{Bony}, and adapt it to our non-linear and non-homogeneous setting.

We begin by recalling from \cite[Definition 2.1]{Bony}, the definition of a nonzero vector $\mathbf{v}$ orthogonal to a set $F\subset \R^n$ at a point $y\in \partial F$. 

 \begin{defn} 
Let $F$ be a relatively closed subset of $\Omega$. We say that a vector $\mathbf{v}\in \mathbb{R}^n\setminus\{0\}$ is (exterior) normal to $F$ at a point $y\in \Omega\cap\partial F$ if
\[
\overline{B(y+\bf{v},|\bf{v}|)}\subset (\Omega\setminus F)\cup\{y\}.
\]
If this inclusion holds, we write $\mathbf{v}\perp F$ at $y$. Set
\[
F^{*}=\{y\in \Omega\cap \partial F\colon\mbox{there exists}\  \mathbf{v}\  \mbox{such that}\  \mathbf{v} \perp F\  at \ y\}.
\]
\end{defn}
Note when $\Omega$ is connected and $\emptyset \neq F\neq \Omega$, we have $F^{*}\neq \emptyset$.

We list in the following some of the results  and definitions from \cite{Bony} that play a role in our proof.

\begin{defn} Let $X$ be vector field in $\Omega$ and $F\subset \Omega$ be a closed set. We say that $X$ is tangent to $F$ if, for all $x_0\in F^*$ and all vectors $v$ normal to $F$ at $x_0$ one has that their Euclidean product vanishes, i.e.  $\langle X(x_0), v\rangle=0$.
\end{defn}

The following results are  from \cite[Theoreme 2.1]{Bony}, and \cite[Theoreme 2.2]{Bony}:
\begin{thm}\label{B1} Let $\Omega\subset \R^n$ be an open set and $F\subset \Omega$ a closed subset. Let $X$ be a Lipschitz vector field in $\Omega$.  If $X$ is tangent to $F$ then all its integral curves that intersect $F$  are entirely contained in $F$.
\end{thm}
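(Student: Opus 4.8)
The plan is to establish \emph{both} forward and backward invariance of $F$ along the flow of $X$. Since the tangency condition is insensitive to the sign of $X$ (the relation $\langle X(x_0),v\rangle=0$ is equivalent to $\langle -X(x_0),v\rangle=0$), and the integral curves of $-X$ are precisely the time-reversed integral curves of $X$, it suffices to prove forward invariance: if $\gamma$ solves $\dot\gamma=X(\gamma)$ and $\gamma(0)\in F$, then $\gamma(t)\in F$ for all $t\ge0$ in the interval of existence. This is a Nagumo-type invariance statement, and I would prove it through a differential inequality for the squared distance to $F$ followed by Gronwall's lemma.

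Set $\phi(t)=\dist(\gamma(t),F)^2$. Since $F$ is relatively closed and $\gamma$ is $C^1$, the function $\phi$ is locally Lipschitz and nonnegative, with $\phi(0)=0$. I would show that for a.e.\ $t$ one has $\dot\phi(t)\le 2L\,\phi(t)$, where $L$ is the Lipschitz constant of $X$ near the curve. Combined with $\phi(0)=0$ and $\phi\ge0$, Gronwall's lemma then forces $\phi\equiv0$, i.e.\ $\gamma(t)\in F$, which is exactly the conclusion.

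The heart of the matter is this inequality, and it is where the tangency hypothesis enters. Fix $t$ at which $\phi$ is differentiable and $\gamma(t)=x\notin F$, and let $y\in F$ realize $\dist(x,F)$. When $y$ is the \emph{unique} nearest point, every other point of $F$ lies strictly outside $\overline{B(x,|x-y|)}$; hence, with $\mathbf v=x-y$, the ball $\overline{B(y+\mathbf v,|\mathbf v|)}$ meets $F$ only at $y$, and for $x$ close enough to $F$ that $\overline{B(x,|x-y|)}\subset\Omega$ we obtain $\mathbf v\perp F$ at $y$, so $y\in F^*$. The tangency hypothesis yields $\langle X(y),x-y\rangle=0$. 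Using $\nabla\bigl(\dist(\cdot,F)^2\bigr)(x)=2(x-y)$ at such points and the Lipschitz bound on $X$,
\[
\dot\phi(t)=2\langle x-y,X(x)\rangle=2\langle x-y,X(x)-X(y)\rangle\le 2\,|x-y|\,L\,|x-y|=2L\,\phi(t),
\]
as required.

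The main obstacle is the possible \emph{non-uniqueness} of the nearest point $y$, in which case $x-y$ need not be a Bony normal (the closed ball through $x$ then contains several points of $F$) and the hypothesis does not apply verbatim. I would dispatch this using the semiconcavity of $\dist(\cdot,F)^2$: this function is differentiable off a Lebesgue-null set, with a unique nearest point exactly at its points of differentiability, and its superdifferential at any $x$ is the closed convex hull of limits of gradients $2(x_n-y_n)$ along differentiability points $x_n\to x$. Bounding the upper Dini derivative of $\phi$ by the pairing of $X(\gamma(t))$ with this superdifferential, every relevant direction $x-y$ is realized as a limit of genuine Bony normals $x_n-y_n$ at unique-projection points $x_n$, where $\langle X(y_n),x_n-y_n\rangle=0$; since $y_n\to y$, continuity of $X$ transfers the orthogonality to the limit and the estimate $\dot\phi\le 2L\phi$ persists. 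This reduction to the generic case is precisely where one needs Bony's geometric construction of exterior tangent balls, which guarantees that $F^*$ carries enough normals.
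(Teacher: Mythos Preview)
The paper does not actually prove this theorem; it merely quotes it from Bony's 1969 paper \cite{Bony} and moves on. So there is no in-paper proof to compare against. That said, your argument is correct and is essentially Bony's original one: control the upper derivative of $\phi(t)=\dist(\gamma(t),F)^2$ by $2L\phi(t)$ using the tangency hypothesis together with the Lipschitz bound on $X$, then apply Gronwall with $\phi(0)=0$.

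Two small remarks on presentation. First, it is cleaner to work with the upper right Dini derivative throughout rather than a.e.\ differentiability: for \emph{any} nearest point $y$ of $x=\gamma(t)$ one has, for $h>0$,
\[
\phi(t+h)-\phi(t)\le |\gamma(t+h)-y|^2-|x-y|^2,
\]
whence $D^+\phi(t)\le 2\langle x-y,X(x)\rangle$ without any regularity discussion. Second, your handling of the non-uniqueness issue is the right idea and is precisely Bony's device: approximate $x$ by points $x_n$ with unique nearest point $y_n$, so that $x_n-y_n$ is a genuine exterior normal and $\langle X(y_n),x_n-y_n\rangle=0$; any subsequential limit $y$ of $y_n$ is a nearest point of $x$ and inherits $\langle X(y),x-y\rangle=0$, which combined with the previous display yields $D^+\phi(t)\le 2L\phi(t)$. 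The localization ``$\overline{B(x,|x-y|)}\subset\Omega$'' is harmless since one argues first on a short time interval and then extends by continuation.
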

Note that the converse of this result is also true, and follows from a direct computation.

\begin{thm}\label{B2} Let $\Omega\subset \R^n$ be an open set and $F\subset \Omega$ a closed subset. Let $X_1,...,X_m$ be  smooth vector fields in $\Omega$.  If $X_1,...,X_m$ are tangent to $F$ then so is the Lie algebra they generate.
\end{thm}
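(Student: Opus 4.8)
The plan is to separate the two operations that generate the Lie algebra --- taking smooth-coefficient linear combinations and taking Lie brackets --- and to verify that the class of vector fields tangent to $F$ is closed under each. Since every element of the Lie algebra generated by $X_1,\dots,X_m$ is a finite sum of iterated brackets of the $X_j$, an induction on bracket length then reduces the whole statement to the single bracket of two tangent fields.

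Closure under linear combinations is immediate, because tangency is a pointwise linear condition: at any $x_0\in F^*$ and any $\mathbf{v}\perp F$ at $x_0$ one has $\langle (aX+bY)(x_0),\mathbf{v}\rangle=a(x_0)\langle X(x_0),\mathbf{v}\rangle+b(x_0)\langle Y(x_0),\mathbf{v}\rangle=0$ for smooth functions $a,b$ as soon as $X,Y$ are tangent.

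For the bracket I would pass to the flow-invariance picture provided by Theorem \ref{B1}. Since the $X_j$ are smooth, hence locally Lipschitz, tangency of a field $X$ to $F$ is equivalent to its flow $\Phi^X_t$ mapping $F$ into $F$ (the integral curve through any point of $F$ lies in $F$). Fix tangent fields $X,Y$, a point $x_0\in F^*$, and a normal $\mathbf{v}\perp F$ at $x_0$, and consider the commutator of flows
\[
c(t)=\Phi^Y_{-t}\circ\Phi^X_{-t}\circ\Phi^Y_{t}\circ\Phi^X_{t}(x_0),
\]
which is defined for $|t|$ small. Each of the four flows preserves $F$, so starting from $x_0\in F$ we get $c(t)\in F$ for all small $t$; moreover the classical second-order expansion of a commutator of flows gives $c(t)=x_0+t^{2}[X,Y](x_0)+o(t^{2})$ (the precise sign of the leading term is irrelevant below). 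Now invoke the ball defining the normal: $\overline{B(x_0+\mathbf{v},|\mathbf{v}|)}\cap F=\{x_0\}$ forces $\phi(t):=|c(t)-(x_0+\mathbf{v})|^{2}\ge|\mathbf{v}|^{2}=\phi(0)$, and substituting the expansion of $c$ yields $\phi(t)-|\mathbf{v}|^{2}=-2t^{2}\langle \mathbf{v},[X,Y](x_0)\rangle+o(t^{2})\ge0$, hence $\langle \mathbf{v},[X,Y](x_0)\rangle\le0$. Running the same argument with the reversed commutator replaces $[X,Y]$ by $[Y,X]=-[X,Y]$ and gives the opposite inequality, so $\langle \mathbf{v},[X,Y](x_0)\rangle=0$; as $x_0\in F^*$ and $\mathbf{v}$ were arbitrary, $[X,Y]$ is tangent to $F$.

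The part I expect to demand the most care is the flow argument rather than any algebra: one must ensure that for $x_0$ ranging over a fixed compact neighborhood all four flows in $c(t)$ exist for $|t|$ small while keeping the intermediate points inside a set where Theorem \ref{B1} applies, and one must justify the second-order Taylor expansion of $c$. It is precisely because this expansion detects $[X,Y]$ only at order $t^{2}$ --- an even power, so both signs of $t$ give the same one-sided information --- that a single commutator yields only an inequality, and one is obliged to symmetrize with the reverse commutator in order to conclude that the pairing vanishes.
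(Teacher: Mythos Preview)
The paper does not actually prove this theorem: it is quoted from Bony's original article \cite{Bony} (Th\'eor\`eme~2.2 there) and used as a black box. Your argument is correct and is, in fact, essentially Bony's own proof: reduce to a single bracket, use Theorem~\ref{B1} to pass from tangency to flow-invariance of $F$, trap the commutator curve $c(t)$ inside $F$, and read off $\langle [X,Y](x_0),\mathbf{v}\rangle=0$ from the second-order expansion against the exterior ball defining $\mathbf{v}\perp F$.

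Two small remarks. First, your symmetrization via the reversed commutator is a clean way to upgrade the inequality to an equality; an equivalent shortcut is to note that the curve $s\mapsto c(\sqrt{s})$ (for $s\ge 0$) has one-sided derivative $[X,Y](x_0)$ at $s=0$, and apply the same ball argument you used, together with the curve $s\mapsto c(-\sqrt{s})$. Second, your caveat about domains is the right one: one only needs the flows to exist for $|t|$ small with values in $\Omega$, which is automatic since $x_0\in\Omega$ and the fields are smooth, and Theorem~\ref{B1} then keeps all intermediate points in $F\subset\Omega$. No further hypotheses are needed.
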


As a corollary, if $X_1,...,X_m$ satisfy H\"ormander finite rank condition \eqref{hormander} and are all tangent to $F$ then every curve that touches $F$ is entirely contained in $F$, so that either $F$ is the empty set or $F=\Omega$.

%
%
%
%
%
%
%
\section{A Hopf-type comparison principle and proof of Theorem \ref{main}}\label{3}
First we state precisely  the structure conditions  imposed on the left hand side of  \eqref{PDE}.
  The functions $A_j$ satisfy the following ellipticity and growth condition: For $p>1$, for a.e.  $\xi\in \R^m$ and for every $\eta\in \R^m$, 
\begin{equation}\label{structure condition}
\begin{aligned}
&\sum_{i,j=1}^m \frac{\partial A_j}{\partial\xi_i}(\xi)\eta_i\eta_j \ge\beta(\kappa+ |\xi|)^{p-2} |\eta|^2\\
&\sum_{i,j=1}^m |\frac{\partial A_j}{\partial\xi_i}(\xi)|\le \gamma(\kappa+ |\xi|)^{p-2}
\end{aligned}
\end{equation}
for some positive constants $\beta, \gamma, \kappa$.  

One can easily deduce that there exists positive constant $\lambda,C$ such that for all $\xi \in \R^m$,
\begin{equation}\label{weakinequality}
\langle A_j(\xi)-A_j(\xi'), \xi-\xi'\rangle \ge \lambda \begin{cases}
       (1+|\xi|+|\xi'|)^{p-2}|\xi-\xi'|^2  &\quad\text{if}\quad p\le 2\\
     
       |\xi-\xi'|^p &\quad\text{if}\quad p\ge 2, \\ 
     \end{cases}
\end{equation}
and 
$$|A_j(\xi)|\le C (\kappa+|\xi|)^{p-2}|\xi|.$$
The subelliptic  $p-$Laplacian 
\[
L_p u=\sum_{j=1}^m X_j^* (|X u|^{p-2}X_j u),
\]
corresponds to the choice $A_j(\xi)=|\xi|^{p-2}\xi_j$ for $j=1, \cdots, m$.

We will need the following immediate consequence of the monotonicity inequality \eqref{weakinequality}.
\begin{lem}[Weak Comparison Principle]\label{weak}
Let $\Omega\subset \mathbb{R}^n$ be an open and connected set and $v_1, v_2\in C^1(\Omega)$ satisfy in a weak sense
\begin{equation}
\begin{cases}  L_pv_2  \le f(x,v_2) & \mbox{in } \Omega\\  L_p v_1\ge f(x,v_1) & \mbox{in } \Omega, \end{cases}
\end{equation}
with $A_j$ satisfying the structure conditions \eqref{structure condition} and $\p_u f(x,u) \le 0$.
If $v_2\le v_1$ in $\p \Omega,$ then $v_2\le v_1$ in $\Omega$.
\end{lem}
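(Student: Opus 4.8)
The plan is to prove this comparison principle by a standard testing argument based on the monotonicity inequality \eqref{weakinequality}. First I would set $w = v_2 - v_1$ and observe that the goal is to show $w \le 0$ in $\Omega$ given $w \le 0$ on $\partial\Omega$. The natural test function is $\varphi = w^+ = (v_2 - v_1)^+$, the positive part, which is nonnegative, belongs to the appropriate Sobolev space (being $C^1$-based, truncated), and vanishes on $\partial\Omega$ precisely because $w \le 0$ there. The set where $\varphi > 0$ is exactly the set $\{v_2 > v_1\}$, and on this set $X\varphi = X v_2 - X v_1$.

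Next I would subtract the two weak formulations. Testing the inequality $L_p v_1 \ge f(x,v_1)$ and $L_p v_2 \le f(x,v_2)$ against the nonnegative test function $\varphi$, and recalling that the weak form of $L_p v = \sum_j X_j^*(A_j(Xv))$ pairs as $\int \sum_j A_j(Xv)\, X_j \varphi \, dx$ against test functions, I obtain after subtraction
\begin{equation}
\int_\Omega \sum_{j=1}^m \bigl(A_j(X v_2) - A_j(X v_1)\bigr)\, X_j \varphi \, dx \le \int_\Omega \bigl(f(x,v_2) - f(x,v_1)\bigr)\varphi \, dx.
\end{equation}
On the support of $\varphi$ we have $X_j\varphi = X_j v_2 - X_j v_1$, so the left-hand integrand becomes $\sum_j (A_j(Xv_2) - A_j(Xv_1))(X_j v_2 - X_j v_1)$, which by the monotonicity inequality \eqref{weakinequality} is bounded below by a nonnegative quantity (a positive multiple of $|Xv_2 - Xv_1|^p$ or the weighted $p \le 2$ analogue). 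Hence the left-hand side is nonnegative.

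For the right-hand side I would exploit the sign condition $\partial_u f \le 0$: on the set $\{v_2 > v_1\}$ we have $v_2 > v_1$, and monotonicity of $f$ in $u$ gives $f(x,v_2) - f(x,v_1) \le 0$, so the entire right-hand side is $\le 0$ since $\varphi \ge 0$. Combining, the nonnegative left-hand side is squeezed between zero above and itself below, forcing $\int_\Omega \sum_j (A_j(Xv_2) - A_j(Xv_1))\, X_j\varphi \, dx = 0$. By the strict positivity in \eqref{weakinequality}, this forces $X v_2 = X v_1$ almost everywhere on $\{v_2 > v_1\}$, hence $X\varphi = 0$ there, so $X\varphi = 0$ on all of $\Omega$. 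Since the vector fields satisfy H\"ormander's condition \eqref{hormander} and $\Omega$ is connected, $X\varphi \equiv 0$ implies $\varphi$ is constant on $\Omega$; as $\varphi = 0$ on $\partial\Omega$, that constant is zero, giving $\varphi \equiv 0$, i.e. $v_2 \le v_1$ in $\Omega$.

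The main obstacle is a technical one about admissibility and regularity rather than conceptual: one must justify that $\varphi = (v_2 - v_1)^+$ is a legitimate test function in the weak formulation and that the integration-by-parts identities $\int X_j^*(A_j)\,\varphi = \int A_j\, X_j\varphi$ hold with the correct boundary terms vanishing. Because $v_1, v_2 \in C^1(\Omega)$, the horizontal gradients $Xv_i$ are continuous and the truncation $w^+$ is Lipschitz in the intrinsic sense, so this is routine but requires care with the nonsmooth density term $d_j(x)$ in $X_j^*$ and with the fact that $\varphi$ may not have compact support; one typically handles the latter by the boundary condition $w \le 0$ on $\partial\Omega$ together with a cutoff or density argument. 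The step deducing $\varphi \equiv \text{const}$ from $X\varphi \equiv 0$ under the bracket-generating condition is where the subelliptic structure enters, and I would invoke the fact that H\"ormander's condition makes $\Omega$ connected in the Carnot-Carath\'eodory metric, so a function with vanishing horizontal gradient is constant.
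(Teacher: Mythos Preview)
Your proposal is correct and follows essentially the same testing argument as the paper: subtract the weak formulations against the positive part of $v_2-v_1$, use the monotonicity \eqref{weakinequality} on the left and $\partial_u f\le 0$ on the right, and conclude that the horizontal gradient of the difference vanishes on the contact set. The only variation is that the paper resolves the compact-support obstacle you flag by testing instead with $\varphi=(v_2-v_1-\epsilon)^+$, so that $\overline{\{\varphi>0\}}\subset\Omega$ thanks to the boundary condition, obtains $v_2\le v_1+\epsilon$, and then lets $\epsilon\to 0$.
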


\begin{proof}
Given an arbitrary $\epsilon>0$, we define $E_\epsilon=\{x\in \Omega| v_2(x)>v_1(x)+\epsilon\}$. Assume that $E_\epsilon\neq \emptyset$, then $\overline{E_\epsilon}\subset \Omega$. 
For all $\varphi \in C_c^1(\Omega)$, we have
\[
\int_\Omega \langle A_j(X v_2), X\varphi\rangle\le \int_\Omega f(x,v_2) \varphi ,
\]
\[
\int_\Omega \langle A_j(X v_1), X\varphi\rangle\ge \int_\Omega f(x,v_1) \varphi.
\]
Subtracting the above two inequalities and setting $\varphi(x)=\max\{v_2(x)-v_1(x)-\epsilon, 0\}$ then as a consequence of $(ii)$ in \eqref{f}, one has
\[
\int_{E_\epsilon} \langle A_j(X v_2)-A_j(X v_1), X(v_2-v_1)\rangle\le \int_{\{v_2>v_1+\e\}}  (f(x,v_2)-f(x,v_1)) (v_2-v_1-\e)   \le 0.
\] 
By \eqref{weakinequality}, this inequality holds if and only if $X(v_2-v_1)=0$. Thus, $v_2=v_1+C$ in $E_\epsilon$. The fact that $v_2=v_1+\epsilon$ on $\partial E_{\epsilon}$ implies that $C=\epsilon$. It follows that $v_2\le v_1+\epsilon$ in $\Omega$. Let $\epsilon\to 0$, we get $v_2\le v_1$ in $\Omega$. 
\end{proof}

Next, we prove an analogue of the classical Hopf comparison principle: Given a subsolution $v_2$ and a supersolution $v_1$ such that $v_2 \le v_1$, then every vector field $X_1,...,X_m$ must be tangent to the contact set $F=\{v_2=v_1\}$.

\begin{lem}{(A Hopf-type Comparison Principle)}\label{hopf} Let $\Omega\subset \mathbb{R}^n$ be an open and connected set  and $v_1\in C^1(\Omega)$, $v_2\in C^2(\Omega)$ with $ |X v_2|\ge \delta \text{ in }\Omega$ satisfy 
\begin{equation}\label{1}
\begin{cases} v_2\le v_1   & \mbox{in } \Omega \\ L_pv_2  \le f(x,v_2) & \mbox{in } \Omega\\  L_p v_1\ge f(x,v_1) & \mbox{in } \Omega. \end{cases}
\end{equation}
Set $
F=\{x\in \Omega: v_2(x)=v_1(x)\}$. If the structure conditions \eqref{structure condition} and hypothesis \eqref{f} are satisfied  and  $\emptyset \neq F \neq \Omega$,
then for every $y\in F^{*}$ and $\mathbf{v} \perp F$ at $y$, it follows that
\[
\langle X_i(y), \mathbf{v}\rangle=0
\]
for all $i=1,\cdots, m.$
\end{lem}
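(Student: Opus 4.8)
The plan is to argue by contradiction and adapt the classical Hopf boundary-point lemma to the linearization of $L_p$ around the nondegenerate solution $v_2$. Set $w=v_1-v_2\in C^1(\Omega)$; then $w\ge 0$ and $F=\{w=0\}$ is exactly the set where the nonnegative function $w$ attains its minimum value $0$. Since $w\in C^1$, at every $y\in F$ one automatically has $\nabla w(y)=0$, hence $\langle \nabla w(y),\mathbf{v}\rangle=0$ for every $\mathbf{v}$; in particular $X_iw(y)=0$ for all $i$. The whole argument consists in showing that if some field were transverse to $F$ at $y\in F^{*}$, this identity would be violated.

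First I would linearize. Subtracting the two differential inequalities in \eqref{1} gives $L_pv_1-L_pv_2\ge f(x,v_1)-f(x,v_2)$, and writing $A_j(Xv_1)-A_j(Xv_2)=\sum_i \tilde a_{ij}(x)X_i w$ with $\tilde a_{ij}(x)=\int_0^1 \frac{\partial A_j}{\partial\xi_i}(Xv_2+tXw)\,dt$ expresses the left side as a linear divergence-form operator $\mathcal{L}w=\sum_j X_j^*\bigl(\sum_i \tilde a_{ij}X_i w\bigr)$. The decisive point is that near a contact point $y$ the coefficients $\tilde a_{ij}$ are uniformly elliptic and bounded: because $v_2\in C^2$ with $|Xv_2|\ge\delta$ and because $Xw(y)=0$, the vectors $Xv_2+tXw=(1-t)Xv_2+tXv_1$ have norm in $[\delta/2,M]$ on a small ball around $y$, so the structure conditions \eqref{structure condition} give $\beta'|\eta|^2\le \sum_{ij}\tilde a_{ij}\eta_i\eta_j$ and $|\tilde a_{ij}|\le\gamma'$ there. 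Combining \eqref{f}(i), which yields $f(x,v_1)-f(x,v_2)\le 0$, with the Lipschitz bound \eqref{f}(ii), which yields $f(x,v_1)-f(x,v_2)\ge -Lw$, I obtain that $w$ is a weak supersolution of the uniformly subelliptic operator $\mathcal{M}:=\mathcal{L}+L\,\mathrm{Id}$ near $y$, the zeroth order coefficient $+L$ having the sign compatible with a minimum principle.

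Next comes the Hopf barrier, which is where tangency is forced. Suppose $\langle X_{i_0}(y),\mathbf{v}\rangle\neq 0$ for some $i_0$, let $c=y+\mathbf{v}$ be the center of the exterior ball $\overline{B(c,|\mathbf{v}|)}$ meeting $F$ only at $y$, and work on the cap $D=B(c,|\mathbf{v}|)\cap B(y,\rho)$ with $h(x)=e^{-\alpha|x-c|^2}-e^{-\alpha|\mathbf{v}|^2}$. A direct computation shows that the second-order part of $\mathcal{M}h$ has leading term $-4\alpha^2 e^{-\alpha|x-c|^2}\sum_{ij}\tilde a_{ij}\langle x-c,X_i\rangle\langle x-c,X_j\rangle$; at $y$ this quadratic form equals $\sum_{ij}\tilde a_{ij}(y)\langle\mathbf{v},X_i\rangle\langle\mathbf{v},X_j\rangle$, which is strictly positive precisely because $\tilde a_{ij}(y)$ is positive definite and transversality makes $(\langle\mathbf{v},X_1\rangle,\dots,\langle\mathbf{v},X_m\rangle)\neq 0$. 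Hence, after shrinking $\rho$ and taking $\alpha$ large, $\mathcal{M}h<0$ on $D$, so $h$ is a strict subsolution there.

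Finally I would run the comparison. On the spherical part of $\partial D$ lying in $\{|x-c|=|\mathbf{v}|\}$ one has $h=0\le w$, while on the remaining part $\partial B(y,\rho)\cap B(c,|\mathbf{v}|)$, a compact subset of $\Omega\setminus F$, one has $w\ge m_0>0$; choosing $\epsilon$ small gives $w-\epsilon h\ge 0$ on $\partial D$. Since $\mathcal{M}(w-\epsilon h)\ge -\epsilon\mathcal{M}h>0$, the weak minimum principle for $\mathcal{M}$—proved by the same energy testing used in Lemma \ref{weak}, the uniform ellipticity of $\tilde a_{ij}$ absorbing the bounded drift from $X_j^*$ on the small set $D$—yields $w-\epsilon h\ge 0$ in $D$ with equality at $y$. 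Differentiating along the inward ray $s\mapsto y+s\mathbf{v}/|\mathbf{v}|$ then gives $\langle\nabla w(y),\mathbf{v}\rangle\ge\epsilon\langle\nabla h(y),\mathbf{v}\rangle=2\alpha\epsilon|\mathbf{v}|^2e^{-\alpha|\mathbf{v}|^2}>0$, contradicting $\nabla w(y)=0$. Therefore no field can be transverse, i.e. $\langle X_i(y),\mathbf{v}\rangle=0$ for all $i$. I expect the main obstacle to be the rigorous verification of the uniform ellipticity and boundedness of $\tilde a_{ij}$ near the contact set, together with the weak minimum principle for $\mathcal{M}$ with merely continuous coefficients and a first-order drift; everything else is a faithful transcription of Bony's barrier to the linearized equation.
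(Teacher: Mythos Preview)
Your argument follows the natural ``linearize, then apply the linear Hopf lemma'' route, whereas the paper stays nonlinear throughout: it builds the barrier $b(x)=\alpha^{-2}(e^{-\alpha|x-z|^2}-e^{-\alpha r^2})$, computes $L_p(v_2+b)$ \emph{pointwise} (legitimate because $v_2+b\in C^2$ and $|X(v_2+b)|\ge\delta/2$ for $\alpha$ large), shows directly that $L_p(v_2+b)\le f(x,v_2+b)$, and then invokes the \emph{nonlinear} weak comparison Lemma~\ref{weak} against $v_1$. No linearized operator ever appears. This buys the paper exactly the regularity it needs: all derivatives fall on the $C^2$ object $v_2+b$, never on $v_1$.

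Your route has a real technical gap at the barrier step. The coefficients $\tilde a_{ij}(x)=\int_0^1\partial_{\xi_i}A_j(Xv_2+tXw)\,dt$ depend on $Xv_1$, which is merely continuous. When you write ``a direct computation shows that the second-order part of $\mathcal{M}h$ has leading term $-4\alpha^2e^{-\alpha|x-c|^2}\sum_{ij}\tilde a_{ij}\langle x-c,X_i\rangle\langle x-c,X_j\rangle$'' and then conclude $\mathcal{M}h<0$, you are implicitly integrating by parts and throwing a derivative onto $\tilde a_{ij}$; the resulting first-order term $\sum_j(X_j\tilde a_{ij})X_ih$ may not even exist as a function, so there is no pointwise inequality to dominate by taking $\alpha$ large. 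Equivalently, in the weak form you need $\int \tilde a_{ij}X_ih\,X_j\phi+L\int h\phi\le 0$ for every nonnegative $\phi$, and there is no obvious way to verify this without that integration by parts. The weak minimum principle for $\mathcal{M}$ with $L^\infty$ coefficients is fine for functions already known to be supersolutions, but it does not by itself certify that a smooth $h$ is a subsolution.

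Two clean fixes are available. Either (i) linearize with the frozen coefficients $a_{ij}(x)=\partial_{\xi_i}A_j(Xv_2(x))$, which are $C^1$ thanks to $v_2\in C^2$ and $|Xv_2|\ge\delta$; then $\mathcal{L}_0 h$ is a genuine function, the $O(|Xw|^2)$ remainder from $A(Xv_1)-A(Xv_2)-a\,Xw$ is harmless near the contact set since $Xw(y)=0$, and your Hopf argument goes through. Or (ii) abandon the linearization and do what the paper does: show $v_2+\epsilon h$ is a subsolution of the nonlinear problem and compare with $v_1$ via Lemma~\ref{weak}. Option (ii) is shorter and is precisely how the asymmetry of the hypotheses ($v_2\in C^2$, $v_1\in C^1$) gets used.
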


\begin{proof}
We argue by contradiction and 
suppose  that there exists $y\in F^{*}$, a vector $\mathbf{v}\perp F$ at $y$, and $i\in \{1, \cdots m\}$ such that 
$\sigma_i(y):=\langle X_i(y), \mathbf{v} \rangle \neq 0$.  We denote by $\sigma(x) $ the vector field $\sigma(x)= (\sigma_1(x),...,\sigma_m(x))$, and note that for $\mathbf{v}$ fixed, this is a smooth vector field on $\Omega$.

Let $z=y+\mathbf{v}$ and $r=|\mathbf{v}|$. We denote by $|x-z|$ the Euclidean distance between the points $x,z$ and proceed to define $\tilde{b}(x)=e^{-\alpha |x-z|^2}$, and
\[
b(x)=\alpha^{-2}(\tilde{b}(x)-e^{-\alpha r^2})
\]
in $\Omega$ where the value of the positive constant $\alpha$  is to be determined later.  Choose a neighborhood $V$ of $y$ such that $0<|\sigma(x)|$ for  $x\in \overline{V}\subset \Omega$ and  denote by $M_1,M_2, M_3, M_4$ positive constants depending on $v_2$ and $F$, such that for every $x\in \overline{V}$ one has $|X_j \sigma_i(x)|\le M_1$, 
$ |X_jX_i(b+v_2) (x)|\le M_2$, and $M_4\le |\sigma(x)|\le M_3$
for $i,j=1,\cdots, m$.

By a direct calculation, one can deduce
\[
X_i b(x)=-2\alpha^{-1} \tilde{b}(x)\sigma_i(x),
\]
\[
|X b(x)|=2\alpha^{-1}\tilde{b}(x)|\sigma(x)|=2\alpha^{-1}\tilde{b}(x)\big(\sum_{i=1}^m \sigma_i(x)^2\big)^{1/2},
\]
\[
X_jX_i b(x)=\tilde{b}(x)(4\sigma_j\sigma_i-2\alpha^{-1}X_j\sigma_i(x)).
\]

Substituting the identities above in the  expression for  $L_p b$ yields
\[
\begin{aligned}
L_p b(x)&= - \sum_{j=1}^m \sum_{i=1}^m \frac{\partial A_j}{\partial\xi_i}(X b) X_jX_i b + d_j A_j(Xb) \\
&= -\tilde{b}(x)\sum_{i,j=1}^m \Big(4\frac{\partial A_j}{\partial\xi_i}(X b)\sigma_j\sigma_i-2\alpha^{-1} \frac{\partial A_j}{\partial\xi_i}(X b) X_j \sigma_i \Big) + d_j A_j(Xb).
\end{aligned}
\]

Applying the structure conditions \eqref{structure condition} of $A_j$, it follows that for every $x\in \overline{V}$,

\[
\begin{aligned}
L_p b(x) &= -\tilde{b}(x)\sum_{i,j=1}^m \Big(4\frac{\partial A_j}{\partial\xi_i}(X b)\sigma_j\sigma_i-2\alpha^{-1} \frac{\partial A_j}{\partial\xi_i}(X b) X_j \sigma_i \Big) + d_j A_j(Xb)\\
&\le  - \tilde{b}(x)\Big(4\beta(\kappa+|X b|)^{p-2}|\sigma|^2-2\alpha^{-1} M_1\gamma(\kappa+|X b|)^{p-2} \Big) + C(\kappa+|Xb|)^{p-2}|Xb|\\
&= -\tilde{b}(x)(\kappa+|X b|)^{p-2}\bigg(4\beta|\sigma|^2-2\alpha^{-1} M_1\gamma -C \alpha^{-1}|\sigma(x)|)\bigg).
\end{aligned}
\]
Similarily, 
\[
\begin{aligned}
\sum_{i, j=1}^m  \frac{\partial A_j}{\partial\xi_i}(X v_2) X_jX_i b&\ge  \tilde{b}(x)(\kappa+|X v_2|)^{p-2}\bigg(4\beta|\sigma|^2-2\alpha^{-1}M_1\gamma \bigg)\\
& \ge  \tilde{b}(x)(\kappa+|X v_2|)^{p-2}\bigg(4\beta M_4^2-2\alpha^{-1}M_1\gamma \bigg).
\end{aligned}
\]
In view of the non-vanishing hypothesis on $|Xv_2|$, there exist $\alpha_1$ and a positive constant $\epsilon_1$ such that for $\alpha\ge \alpha_1$ and $x\in \overline{V}$
\[
|X b(x)|\le \frac{1}{2} |X v_2(x)|,
\]
\[
L_p b(x) \le 0,
\]
\begin{equation}
\sum_{i, j=1}^m  \frac{\partial A_j}{\partial\xi_i}(X v_2) X_jX_i b(x)\ge \epsilon_1\tilde{b}(x).
\end{equation}

Since $A_j(\xi)$ is smooth in $\mathbb{R}^n\setminus \{0\}$, there exists positive constants $C, \epsilon_2$ such that
\begin{equation}
\sum_{i,j=1}^m|\frac{\partial A_j}{\partial\xi_i}(X (b+v_2))-\frac{\partial A_j}{\partial\xi_i}(X v_2)|\le C|X b|\le \epsilon_2 \alpha^{-1} \tilde{b}(x)
\end{equation}
for $x\in \overline{V}$. Thus,

\[
\begin{aligned}
L_p (b+v_2)&=-\sum_{i, j=1}^m  \frac{\partial A_j}{\partial\xi_i}(X (b+v_2)) X_jX_i (b+v_2)+d_j A_j(Xb+Xv_2)\\
&=-\sum_{i, j=1}^m \Big(\frac{\partial A_j}{\partial\xi_i}(X (b+v_2))-\frac{\partial A_j}{\partial\xi_i}(X v_2)+\frac{\partial A_j}{\partial\xi_i}(X v_2)\Big)X_jX_i (b+v_2) +d_j A_j(Xb+Xv_2)\\
&=-\sum_{i, j=1}^m \Big(\frac{\partial A_j}{\partial\xi_i}(X (b+v_2))-\frac{\partial A_j}{\partial\xi_i}(X v_2)\Big)X_jX_i (b+v_2)\\
&-\sum_{i, j=1}^m\frac{\partial A_j}{\partial\xi_i}(X v_2)X_jX_i b-\sum_{i, j=1}^m\frac{\partial A_j}{\partial\xi_i}(X v_2)X_jX_i v_2  +d_j A_j(Xb+Xv_2)\\
&\le M_2\epsilon_2\alpha^{-1}\tilde{b}(x)-\epsilon_1\tilde{b}(x)+L_p v_2 -d_jA_j(Xv_2) +d_j A_j(Xb+Xv_2)  \\
&\le  ( -\epsilon_1+M_2\epsilon_2\alpha^{-1})\tilde{b}(x)  +f(v_2)  +|d_j || A_j(Xb+Xv_2)-A_j(Xv_2)|\\
&\le ( -\epsilon_1+M_2\epsilon_2\alpha^{-1} + C\alpha^{-1}|\sigma(x)| )\tilde{b}(x)  +f(x,v_2) \\
&\le ( -\epsilon_1+M_2\epsilon_2\alpha^{-1} + C\alpha^{-1}|\sigma(x)| )\tilde{b}(x)  +|f(x,b+v_2)-f(x,v_2)| +f(x,b+v_2)  \\
\text{(By $(ii)$ in \eqref{f})} &\le ( -\epsilon_1+M_2\epsilon_2\alpha^{-1} + C\alpha^{-1}|\sigma(x)| )\tilde{b}(x)  +L|b| +f(x,b+v_2)  \\
\end{aligned}
\]
We can  now choose $\alpha\ge \alpha_1$ such that $L_p (b+v_2)\le f(x,b+v_2)$ on $\overline{V}$.

Next, we let $U=V\cap B(z, r)$ and   express its boundary as the union of two components $$\partial U=\Gamma_1\cup \Gamma_2,$$ where $\Gamma_1=\overline{B(z,r)}\cap \partial V$ and $\Gamma_2=\overline{V}\cap \partial B(z,r)$.

For $x\in \Gamma_1\subset \Omega\setminus F$, we have $v_2(x)<v_1(x)$. Choose $\alpha$ be  sufficiently large so that $v_2(x)+ b(x)\le v_1(x)$ on $\Gamma_1$ and $L_p(v_2+b)\le f(x,b+v_2)$ on $U$. On the other hand, since $b(x)=0$ when $x\in \Gamma_2$,  then the estimate $v_2(x)+ b(x)\le v_1(x)$ also holds on $\Gamma_2$. Thus one eventually obtains

\begin{equation}
\begin{cases} v_2+ b \le v_1   & \mbox{in } \partial U \\ L_p(v_2+ b)  \le f(x,b+v_2) & \mbox{in } U\\  L_p v_1\ge f(x,v_1) & \mbox{in } U. \end{cases}
\end{equation}
The Weak Comparison Principle in Lemma \ref{weak} implies that $v_2+ b\le v_1$ in $U$. Since $y$ is a maximum point of $v_2-v_1$ in $\Omega$, then necessarily its gradient at $y$ must vanish, i.e. $\nabla (v_2-v_1)(y)=0$. Finally we invoke the $C^1$ regularity of $v_1$ near the contact set and we observe that

\[
\begin{aligned}
0=\langle \mathbf{v}, \nabla(v_2-v_1)(y)\rangle&= \lim_{t\to 0^{+}}\frac{v_2(y+t\mathbf{v})-v_1(y+t\mathbf{v})-(v_2(y)-v_1(y))}{t}\\
&\le - \langle \mathbf{v}, \nabla b(y)\rangle\\
&=-2\alpha^{-1}r^2e^{-\alpha r^2} < 0.
\end{aligned}
\]
Since we have arrived at a contradiction the proof is complete. 

\end{proof}

By a similar argument, a Hopf-type maximum principle can be established. 

\begin{lem}{(A Hopf-type Maximum Principle)}\label{hopf} Let $\Omega\subset \mathbb{R}^n$ be an open and connected set  and $v\in C^2(\Omega)$ satisfy 
\begin{equation}\label{1}
\begin{cases} v\ge 0   & \mbox{in } \Omega \\ L_pv  \ge f(x,v) & \mbox{in } \Omega. \end{cases}
\end{equation}
Set $
F=\{x\in \Omega: v(x)=0\}$. If the structure conditions \eqref{structure condition} and hypothesis \eqref{mf} are satisfied  and  $\emptyset \neq F \neq \Omega$,
then for every $y\in F^{*}$ and $\mathbf{v} \perp F$ at $y$, it follows that
\[
\langle X_i(y), \mathbf{v}\rangle=0
\]
for all $i=1,\cdots, m.$
\end{lem}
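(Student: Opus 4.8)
The plan is to follow the proof of the preceding Hopf-type comparison lemma almost verbatim, with the single structural change forced by the fact that the lower obstacle is now the constant $0$, whose horizontal gradient vanishes identically so that the non-degeneracy $|Xv_2|\ge\delta$ is unavailable. I would argue by contradiction, assuming there are $y\in F^{*}$, a vector $\mathbf v\perp F$ at $y$, and an index $i$ with $\sigma_i(y):=\langle X_i(y),\mathbf v\rangle\neq0$. Since $v$ is continuous and $F=\{v=0\}$ we have $v(y)=0$; since $v\ge0$ attains an interior minimum at $y\in\Omega$ and $v\in C^2$, we also have $\nabla v(y)=0$. I then set $\sigma(x)=(\sigma_1(x),\dots,\sigma_m(x))$, pick a neighborhood $\overline V\subset\Omega$ of $y$ on which $0<M_4\le|\sigma|\le M_3$ and $|X_j\sigma_i|\le M_1$, put $z=y+\mathbf v$, $r=|\mathbf v|$, and use the same barrier $\tilde b(x)=e^{-\alpha|x-z|^2}$, $b(x)=\alpha^{-2}(\tilde b(x)-e^{-\alpha r^2})$, with $\alpha$ large to be fixed. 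The key difference from the comparison case is that I would take the barrier $b$ \emph{itself}, rather than $0+b$, as the competing subsolution; this is legitimate precisely because $|Xb|=2\alpha^{-1}\tilde b\,|\sigma|>0$ on $\overline V$, so $A_j(Xb)$ and its derivatives are smooth there and $L_pb$ is classically defined.

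Reusing the identities for $X_ib$ and $X_jX_ib$ together with the ellipticity and growth bounds in \eqref{structure condition}, the same computation as in the comparison lemma gives, for $\alpha$ large and $x\in\overline V$,
\[
L_pb(x)\le-\tilde b(x)(\kappa+|Xb|)^{p-2}\Big(4\beta|\sigma|^2-2\alpha^{-1}M_1\gamma-C\alpha^{-1}|\sigma|\Big)\le-c\,\tilde b(x)(\kappa+|Xb|)^{p-2}
\]
for some $c>0$. The hard part is then to promote this into the subsolution inequality $L_pb\le f(x,b)$ on $\overline V$, and this is the one place where condition $(ii)$ of \eqref{mf} is indispensable. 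By that hypothesis $|f(x,b)|\le\bar C(\kappa+|b|)^{p-2}|b|$, so it suffices to verify $c\,\tilde b(\kappa+|Xb|)^{p-2}\ge\bar C(\kappa+b)^{p-2}b$. Using the scalings $|Xb|\sim\alpha^{-1}\tilde b$ and $0\le b\le\alpha^{-2}\tilde b$, one checks that the left side dominates the right for large $\alpha$ both when $\kappa>0$ (the ratio grows like $\alpha^{2}$) and in the genuinely degenerate case $\kappa=0$ (the ratio grows like $\alpha^{p}$). The exponent $p-2$ on $f$ in \eqref{mf} is exactly what matches the $p$-homogeneous scaling of $L_pb$, which is why no weaker growth hypothesis would suffice; this scaling balance is the technical core of the argument.

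With $b$ established as a subsolution, the remainder runs parallel to the comparison case. I would set $U=V\cap B(z,r)$ and write $\partial U=\Gamma_1\cup\Gamma_2$ with $\Gamma_1=\overline{B(z,r)}\cap\partial V$ and $\Gamma_2=\overline V\cap\partial B(z,r)$. On $\Gamma_2$ one has $b=0\le v$; on $\Gamma_1$ the normality $\mathbf v\perp F$ forces $\overline{B(z,r)}\cap F=\{y\}$, hence $v\ge m_0>0$ there, while $0\le b\le\alpha^{-2}$, so $b\le v$ for $\alpha$ large. Thus $b\le v$ on $\partial U$, and since $\partial_uf\le0$ by $(i)$ of \eqref{mf}, the Weak Comparison Principle (Lemma \ref{weak}), applied with supersolution $v$ and subsolution $b$, yields $b\le v$ in $U$. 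Finally, since $b(y)=v(y)=0$ and $y+t\mathbf v\in U$ for small $t>0$, differentiating $v\ge b$ along $\mathbf v$ at $y$ gives
\[
0=\langle\mathbf v,\nabla v(y)\rangle\ge\langle\mathbf v,\nabla b(y)\rangle=2\alpha^{-1}r^2e^{-\alpha r^2}>0,
\]
the desired contradiction.
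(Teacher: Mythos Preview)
Your argument is correct and follows the same Bony-barrier strategy as the paper's own proof. The only difference is cosmetic: the paper writes the barrier as $b(x)=k(\tilde b(x)-e^{-\alpha r^2})$ with two independent parameters, first taking $\alpha$ large to force $L_p b\le f(x,b)$ via \eqref{mf} and then $k$ small to secure $b\le v$ on $\Gamma_1$, whereas you retain the single-parameter form $b=\alpha^{-2}(\tilde b-e^{-\alpha r^2})$ from the comparison lemma and let one large $\alpha$ handle both tasks simultaneously.
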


\begin{proof}
We argue by contradiction and 
suppose  that there exists $y\in F^{*}$, a vector $\mathbf{v}\perp F$ at $y$, and $i\in \{1, \cdots m\}$ such that 
$\sigma_i(y):=\langle X_i(y), \mathbf{v} \rangle \neq 0$.  We denote by $\sigma(x) $ the vector field $\sigma(x)= (\sigma_1(x),...,\sigma_m(x))$, and note that for $\mathbf{v}$ fixed, this is a smooth vector field on $\Omega$.

Let $z=y+\mathbf{v}$ and $r=|\mathbf{v}|$. We denote by $|x-z|$ the Euclidean distance between the points $x,z$ and proceed to define $\tilde{b}(x)=e^{-\alpha |x-z|^2}$, and
\[
b(x)=k(\tilde{b}(x)-e^{-\alpha r^2})
\]
in $\Omega$ where the value of the positive constant $k$ and $\alpha$ are to be determined later.  Choose a neighborhood $V$ of $y$ such that $0<|\sigma(x)|$ for  $x\in \overline{V}\subset \Omega$ and  denote by $M_1, M_2, M_3$ positive constants depending on $v_2$ and $F$, such that for every $x\in \overline{V}$ one has $|X_j \sigma_i(x)|\le M_1$ and $M_2\le|\sigma(x)|\le M_3$ for $i,j=1,\cdots, m$.

Elementary calculations and hypothesis \eqref{mf} show that for $\alpha$ sufficiently large, 
\[
\begin{aligned}
L_p b(x)&= - \sum_{j=1}^m \sum_{i=1}^m \frac{\partial A_j}{\partial\xi_i}(X b) X_jX_i b + d_j A_j(Xb) \\
&\le -k\tilde{b}(x) \alpha^2 (\kappa+|X b|)^{p-2}
\bigg(4\beta|\sigma|^2-2\alpha^{-1} |X\sigma| \gamma - 2\sup_{\overline V} |d|
|\sigma(x)| \alpha^{-1} \bigg)\\
&= -k\tilde{b}(x) \alpha^2 (\kappa+2\alpha |\sigma(x)|k\tilde b(x) )^{p-2}
\bigg[4\beta M_2^2-2\alpha^{-1} M_1\gamma -C M_3 \alpha^{-1}\bigg]
\\
& \text{ (choosing  }\alpha\text{ sufficiently large we may assume that the expression in brackets is larger than }M_2\beta)
\\
&\le - \alpha \beta   |b(x)| (\kappa+|b(x)|)^{p-2} 
\\
&\le - \bar C  |b(x)| (\kappa+|b(x)|)^{p-2} \le  f(x,b(x))  
\end{aligned}
\]
 for every $x\in \overline{V}$.
%
Next, we let $U=V\cap B(z, r)$ and   express its boundary as the union of two components $$\partial U=\Gamma_1\cup \Gamma_2,$$ where $\Gamma_1=\overline{B(z,r)}\cap \partial V$ and $\Gamma_2=\overline{V}\cap \partial B(z,r)$.

For $x\in \Gamma_1\subset \Omega\setminus F$, we have $v(x)>0$. Choose $k$ be  sufficiently 
small so that $b(x)\le v(x)$ on $\Gamma_1$. On the other hand, since $b(x)=0$ when $x\in \Gamma_2$,  then the estimate $b(x)\le v(x)$ also holds on $\Gamma_2$. Thus one eventually obtains

\begin{equation}
\begin{cases}  b(x)\le v(x)  & \mbox{in } \partial U \\ L_p( b)  \le f(x,b) & \mbox{in } U\\  L_p v\ge f(x,v) & \mbox{in } U. \end{cases}
\end{equation}
The Weak Comparison Principle in Lemma \ref{weak} implies that $b(x)\le v(x)$ in $U$. Since $y$ is a minimum point of $v(x)$ in $\Omega$, then necessarily its gradient at $y$ must vanish, i.e. $\nabla v(y)=0$. Finally we observe that in view of the $C^1$ regularity of $v$, one has 

\[
\begin{aligned}
0=\langle \mathbf{v}, \nabla v(y)\rangle&= \lim_{t\to 0^{+}}\frac{v(y+t\mathbf{v})-v(y)}{t}\\
&\ge  \lim_{t\to 0^{+}}\frac{b(y+t\mathbf{v})-b(y)}{t}\\
&=2k\alpha r^2e^{-\alpha r^2} > 0,
\end{aligned}
\]
arriving at a contradiction. 
 \end{proof}

In view of the Hopf-type comparison principle and of Theorem \ref{B2}, we deduce that the contact set $F=\{v_2=v_1\}$ must be either all of $\Omega$ or  the empty set, thus completing the proof of the strong comparison principle  in Theorem \ref{main}.

Likewise, the strong maximum principle theorem \ref{max} follows from the Hopf-type maximum principle.


\begin{thebibliography}{99}
%
%
\bibitem{BHT}
Z. Balogh, I. Holopainen, J. Tyson, 
\newblock Singular solutions, homogeneous norms, and quasiconformal mappings in Carnot groups. 
\newblock {\em Math. Ann.}
324 (2002), no. 1, 159-186.

\bibitem{Bony}
J.-M. Bony,
\newblock {\em Principe du maximum, in\'egalite de Harnack et unicit\'e du probl\'eme de Cauchy pour les operateurs elliptiques d\'eg\'en\'er\'es.} (French)  \newblock{\em Ann. Inst. Fourier (Grenoble)} 19 (1969), fasc. 1, 277-304 xii.


\bibitem{CDG1} L. Capogna, D. Danielli, N. Garofalo, \newblock{\em An embedding theorem and the Harnack inequality for nonlinear subelliptic equations}, \newblock{\em 
Comm. Partial Differential Equations},18 (1993), no. 9-10, 1765Ð1794. 

\bibitem{CCHY} J-H Cheng, H-L Chiu, J-F Hwang, P. Yang, \newblock{\em Strong maximum principle for mean curvature operators on subriemannian manifolds}, arXiv:1611.02384 (2016), 1--43.

\bibitem{HH} J. Heinonen, I. Holopainen, \newblock{\em Quasiregular maps on Carnot groups}, \newblock{ \em   J. Geom. Anal.}, (1997), 7--109. 
\bibitem{Hormander}
L. H\"ormander, \newblock{\em Hypoelliptic second order differential equations}, \newblock{ \em Acta Math,} 119 (1967), 147--171.

%
%

\bibitem{Tolksdorf}
P. Tolksdorf, 
\newblock {\em On the Dirichlet problem for quasilinear equations in domains with conical boundary points.} Comm. Partial Differential Equations 8 (1983), no. 7, 773-817. 


\bibitem{YN} Z. Yuan, P. Niu, \newblock{\em  A Hopf type principle and a strong maximum principle for the $p$-sub-Laplacian on a group of Heisenberg type}, \newblock{ \em Journal of mathematical research and exposition}, 27 (2007), no. 3, 605Ð-612.




%
%
%
%
%
%
%
%
%
\end{thebibliography}
\end{document}